\newcommand{\R}{\mathbb{R}}
\renewcommand{\S}{\mathbb{S}}
\newcommand{\N}{\mathbb{N}}
\newcommand{\bmat}[1]{\begin{bmatrix}#1\end{bmatrix}}
\newcommand{\norm}[1]{\lVert{#1}\rVert}
\newcommand{\eemph}[1]{\textbf{\textit{#1}}}
\newcommand{\eps}{\varepsilon}
\newcommand{\abs}[1]{\lvert{#1}\rvert}
\newcommand{\vertt}{\big\vert}
\newcommand{\rank}{\mathop{\mathrm{rank}}}
\newcommand{\diag}{\mathop{\mathrm{diag}}}
\newcommand{\range}{\mathop{\mathrm{range}}}
\let\bl\bigl
\let\bbl\Bigl
\let\br\bigr
\let\bbr\Bigr
\begin{document}

\begin{frontmatter}

\title{Positive Forms and Stability of Linear Time-Delay Systems}

\thanks[footnoteinfo]{Corresponding author M.~Peet.}

\author[mp]{Matthew M. Peet}\ead{matthew.peet@inria.fr},
\author[ap]{Antonis Papachristodoulou}\ead{antonis@eng.ox.ac.uk},
\author[sl]{Sanjay Lall}\ead{lall@stanford.edu}

\address[mp]{INRIA-Rocquencourt,Domaine de Voluceau, Rocquencourt BP105, 78153 Le Chesnay Cedex, France}
\address[ap]{Department of Engineering Science, University of Oxford, Parks Road, Oxford, OX1 3PJ, U.K.}
\address[sl]{Department of Aeronautics and Astronautics,  Stanford University, Stanford CA 94305-4035, U.S.A.}

\begin{keyword}
Delay systems, Semidefinite programming.
\end{keyword}

\begin{abstract}
  We consider the problem of constructing Lyapunov functions for
  linear differential equations with delays. For such systems it is
  known that exponential stability implies the existence of a
  positive Lyapunov function which is quadratic on the space of continuous
  functions. We give an explicit parametrization of a sequence
  of finite-dimensional subsets of the cone of positive Lyapunov functions using
  positive semidefinite matrices. This allows stability analysis of
    linear time-delay systems to be formulated as a semidefinite program.
\end{abstract}

\end{frontmatter}


\section{Introduction}\label{sec:Introduction}

In this paper we present an approach to the construction of Lyapunov
functions for systems with time-delays. Specifically, we are
interested in systems of the form
\begin{equation*}
  \dot{x}(t)=\sum_{i=0}^k A_i x(t-h_i)
  \label{eqn:sys1}
\end{equation*}
where $x(t)\in\R^n$.  In the simplest case we are given the delays
$h_0,\dots,h_k$ and the matrices $A_0,\dots,A_k$ and we would like
to determine whether the system is stable.  For such systems it is
known that if the system is stable, then there exists a Lyapunov
function of the form
\begin{align*}
  V(\phi) =
  \int_{-h}^{0}
  \bmat{\phi(0) \\ \phi(s) }^T
  M(s)
  \bmat{\phi(0) \\  \phi(s) }
  \, ds
  +  \int_{-h}^{0}  \int_{-h}^{0}
  \phi(s)^T N(s,t) \phi(t) \, ds \, dt,
\end{align*}
where $M$ and $N$ are piecewise-continuous matrix-valued functions,
and $h=\max\{h_0,\dots,h_k\}$. Here $\phi:[-h,0]\rightarrow \R^n$ is
an element of the state space, which in this case is the space of
continuous functions mapping $[-h,0]$ to $\R^n$. The function $V$ is
thus a quadratic form on the state space.  The derivative is also
such a quadratic form, and the matrix-valued functions which define
it depend affinely on $M$ and $N$.

In this paper we develop an approach which uses semidefinite
programming to construct piecewise-continuous functions $M$ and $N$
such that the function $V$ is positive and its derivative is
negative. The functions we construct are piecewise-polynomial.
Roughly speaking, we show that
\[
  V_1(\phi) = \int_{-h}^{0}
  \bmat{\phi(0) \\ \phi(s) }^T
  M(s)
  \bmat{\phi(0) \\  \phi(s) }
  \, ds
\]
is positive for all $\phi$ if and only if there exists a piecewise
continuous matrix-valued function $T$ such that
\begin{align*}
  M(t)  + \bmat{ T(t) & 0 \\ 0 & 0 } & \geq 0
  \quad \text{for all }t
  \\
  \int_{-h}^0 T(t) \, dt &=0.
\end{align*}
That is, we convert positivity of the integral to \emph{pointwise}
positivity of $M$. This result is stated precisely
in~Theorem~\ref{thm:mainpos}. Pointwise positivity may then be
easily enforced, and in the case of positivity on the real line this
is equivalent to a sum-of-squares constraint. The constraint that
$T$ integrates to zero is a simple linear constraint on the
coefficients of $T$. Notice that the sufficient condition that
$M(s)$ be pointwise nonnegative is conservative, and as the
equivalence above shows it is easy to generate examples where $V_1$
is nonnegative even though $M(s)$ is not pointwise nonnegative.

We also give a necessary and sufficient condition for positivity of
\[
    \int_{-h}^0 \, \int_{-h}^0 \phi(s)^T \;N(s,t)\; \phi(t)\,ds\,dt.
\]
Roughly speaking, if $N$ is polynomial, then the given form is
positive if and only if there exists a positive semidefinite matrix
$Q \succeq 0$ such that $N(s,t)=Z(s)^T Q Z(t)$, where $Z$ is a
vector of monomials. This condition is expressed as a constraint on
the coefficients of $N$ and is stated in Theorem~\ref{thm:mainpos2}.
Note that pointwise positivity of $N$ is not sufficient for
positivity of the functional. The condition that the derivative of
the Lyapunov function be negative is similarly enforced.


\subsection{Background}

The use of Lyapunov functions on an infinite dimensional space to
analyze differential equations with delay originates with the work
of~\citet{krasovskii_1963}.  For linear systems, quadratic Lyapunov
functions were first considered by~\citet{repin_1966}.  The book
of~\citet{gu_2003} presents many useful results in this area, and
further references may be found there as well as
in~\citet{hale_1993,kolmanovskii_1999} and~\citet{niculescu_2001}.
The idea of using sum-of-squares polynomials together with
semidefinite programming to construct Lyapunov functions originates
in~\citet{parrilo_2000}.


\subsection{Notation}

Let $\N$ denote the set of nonnegative integers.  Let $\S^n$ be the
set of $n\times n$ real symmetric matrices, and for $X\in \S^n$ we
write $X \succeq 0$ to mean that $X$ is positive semidefinite.  For
two matrices $A,B$, we denote the Kronecker product by $A \otimes
B$. For $X$ any Banach space and $I\subset \R$ any interval, let
$\Omega(I,X)$ be the space of all functions
\[
\Omega(I,X)=\{\, f \colon I\rightarrow X \,\}
\]
and let $C(I,X)$ be the Banach space of bounded continuous functions
\[
C(I,X) =\{\, f: I \rightarrow X \ \vert\ f \text{ is continuous and
  bounded} \,\}
\]
equipped with the norm
\[
\norm{f} = \sup_{t\in I } \norm{f(t)}_X.
\]
We will omit the range space when it is clear from the context; for
example we write $C[a,b]$ to mean $C([a,b],X)$. A function is called
$C^n(I,X)$ if the $i^{th}$ derivative exists and is a continuous
function for $i=0,\ldots,n$. A function $f\in C[a,b]$ is called
piecewise continuous if there exists a finite number of points $a<
h_1< \dots < h_k< b$ such that $f$ is continuous at all $x\in
[a,b]\backslash \{ h_1,\dots,h_k\}$ and its right and left-hand
limits exist at $\{ h_1,\dots,h_k\}$.

Define also the projection $H_t:\Omega[-h,\infty) \rightarrow
\Omega[-h,0]$ for $t\geq 0$ and $h>0$ by
\[
(H_t x)(s) = x(t+s) \quad \text{for all }s\in[-h,0].
\]
We follow the usual convention and denote $H_t x$ by $x_t$.


\section{System Formulation}

Suppose $0=h_0 < h_1 < \dots < h_k=h$. Define the sets
$H=\{-h_0,\dots,-h_k\}$ and $H^c=[-h,0]\backslash H$ and suppose
$A_0,\dots,A_k\in\R^{n \times n}$.  We consider linear differential
equations with delay, of the form
\begin{equation}
  \label{eqn:sys}
  \dot{x}(t) = \sum_{i=0}^k A_i x(t-h_i)
  \quad \text{for all }t\geq 0,
\end{equation}
where the trajectory $x:[-h,\infty)\rightarrow \R^n$. The boundary
conditions are specified by a given function $\phi:[-h,0]\rightarrow
\R^n$ and the constraint
\begin{equation}
  \label{eqn:bc}
  x(t)=\phi(t) \quad \text{for all }t\in[-h,0].
\end{equation}
If $\phi\in C[-h,0]$, then there exists a unique function $x$
satisfying~(\ref{eqn:sys}) and~(\ref{eqn:bc}). The system is called
\eemph{exponentially stable} if there exists $\sigma >0$ and $a\in \R$
such that for every initial condition $\phi\in C[-h,0]$ the
corresponding solution $x$ satisfies
\[
\norm{x(t)} \leq a e^{-\sigma t} \norm{\phi} \quad \text{for all }t
\geq 0.
\]
We write the solution as an explicit function of the initial
conditions using the map $G:C[-h,0]\rightarrow \Omega[-h,\infty)$,
defined by
\[
(G\phi)(t) = x(t)  \quad \text{for all } t\geq -h,
\]
where $x$ is the unique solution of~(\ref{eqn:sys})
and~(\ref{eqn:bc}) corresponding to initial condition $\phi$. Also
for $s \geq 0 $ define the \emph{flow map}
$\Gamma_s:C[-h,0]\rightarrow C[-h,0]$ by
\[
\Gamma_s \phi = H_s G \phi,
\]
which maps the state of the system $x_t$ to the state at a later
time $x_{t+s} = \Gamma_s x_t$.


\subsection{Lyapunov Functions}

Suppose $V:C[-h,0]\rightarrow \R$. We use the notion of derivative
as follows. Define the \eemph{Lie derivative} of $V$ with respect to
$\Gamma$ by
\[
\dot{V}(\phi) = \limsup_{r\rightarrow 0^+}\frac{1}{r} \bl(
V(\Gamma_r \phi) - V(\phi) \br).
\]
We will use the notation $\dot{V}$ for both the Lie derivative and
the usual derivative, and state explicitly which we mean if it is
not clear from context.  We will consider the set $X$ of quadratic
functions, where $V\in X$ if there exist piecewise continuous
functions $M:[-h,0)\rightarrow \S^{2n}$ and $N:[-h,0)\times
[-h,0)\rightarrow \R^{n \times n}$ such that
\begin{align}
  \label{eqn:vclass}
  V(\phi) = \int_{-h}^{0}
  \bmat{\phi(0) \\ \phi(s) }^T
  M(s)
  \bmat{\phi(0) \\  \phi(s) }
  \, ds
+  \int_{-h}^{0}  \int_{-h}^{0} \phi(s)^T N(s,t) \phi(t) \, ds \,
dt.
\end{align}
The following important result shows that for linear systems with
delay, the system is exponentially stable if and only if there
exists a quadratic Lyapunov function.
\begin{thm}
  \label{thm:lyap}
  The linear system defined by equations~(\ref{eqn:sys})
  and~(\ref{eqn:bc}) is exponentially stable if and only if there
  exists a Lie-differentiable function $V\in X$ and $\eps>0$ such that
  for all $\phi\in C[-h,0]$
  \begin{equation}
    \label{eqn:lyapconds}
    \begin{aligned}
      V(\phi) &\geq \eps \norm{\phi(0)}^2 \\
      \dot{V}(\phi) & \leq - \eps \norm {\phi(0)}^2.
    \end{aligned}
  \end{equation}
  Further $V\in X$ may be chosen such that the corresponding functions
  $M$ and $N$ of equation~(\ref{eqn:vclass}) have the following
  smoothness property: $M(s)$ and $N(s,t)$ are bounded and continuous at all $s,t$
  such that $s\in H^c$ and $t\in H^c$.
\end{thm}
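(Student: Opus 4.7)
The plan is to prove the equivalence in the two usual directions and then verify the smoothness assertion, following the classical Krasovskii converse construction.

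For the sufficiency direction, I would assume that $V\in X$ satisfies the two inequalities of~(\ref{eqn:lyapconds}) and deduce exponential stability by a standard Lyapunov--Krasovskii argument. Because $M$ and $N$ are piecewise continuous on the compact domain $[-h,0]$ (respectively $[-h,0]^2$), the functional admits an upper bound of the form $V(\phi)\le K\bl(\norm{\phi(0)}^2+\int_{-h}^0\norm{\phi(s)}^2\,ds\br)$. Applying $\dot V(x_t)\le-\eps\norm{x(t)}^2$ along any trajectory and combining with the lower bound $V(x_t)\ge\eps\norm{x(t)}^2$ gives integrability of $\norm{x(\cdot)}^2$ on $[0,\infty)$; linearity of the semigroup $\Gamma_s$ then upgrades this to an exponential estimate on $\norm{x(t)}$ and, via the Gronwall-type bound coming from the delay equation itself, on the full segment $\norm{x_t}$.

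For the necessity direction I would use the Krasovskii-type converse functional. Fix any $W\succ 0$ and $\eps_1>0$ and set
\[
V(\phi)=\int_0^\infty x(t;\phi)^T W\,x(t;\phi)\,dt+\eps_1\norm{\phi(0)}^2,
\]
where $x(\cdot;\phi)=G\phi$; the integral converges absolutely by exponential stability. Differentiation along $\Gamma_s$ yields $\dot V(\phi)=-\phi(0)^T W\phi(0)+2\eps_1\phi(0)^T\sum_i A_i\phi(-h_i)$, which for $\eps_1$ small enough and $W$ large enough satisfies the required derivative inequality after absorbing the cross terms, while the added boundary term guarantees $V(\phi)\ge\eps_1\norm{\phi(0)}^2$. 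To exhibit $V$ as an element of $X$, I would substitute the explicit representation
\[
x(t;\phi)=U(t)\phi(0)+\sum_{i=1}^k\int_{-h_i}^{0}U(t-h_i-\theta)A_i\phi(\theta)\,d\theta,
\]
in terms of the fundamental solution $U$ of~(\ref{eqn:sys}), expand the quadratic $x^TWx$, and interchange integrals; a change of variables converts the $t$-integral into kernels in the state variables $s,t\in[-h,0]$, producing explicit $M(s)$ and $N(s,t)$ of the form required in~(\ref{eqn:vclass}). The $\eps_1\norm{\phi(0)}^2$ term is absorbed into $M$ by writing it as $\int_{-h}^0\phi(0)^T(\eps_1/h)\phi(0)\,ds$.

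The smoothness claim then follows from the classical fact that the fundamental solution $U(t)$ of a linear delay equation is real-analytic on each open interval between successive delay values and only loses a derivative across each delay point. Since $M(s)$ and $N(s,t)$ are built by elementary algebraic operations and an integration over $t\in[0,\infty)$ of products of translates of $U$ and $W$, they inherit continuity on $H^c$ and $H^c\times H^c$ respectively, with jumps confined to the delay set $H$.

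The main obstacle I anticipate is the bookkeeping in the necessity direction: one must verify carefully that, after performing the change of variables that converts the $t$-integral on $[0,\infty)$ into kernels on $[-h,0]$, the resulting $M$ and $N$ are genuinely piecewise continuous with discontinuities only at the delay points, and that the derivative estimate survives the addition of the $\eps_1\norm{\phi(0)}^2$ boundary correction. These are essentially computational but delicate.
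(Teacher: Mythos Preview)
The paper does not actually prove this theorem; its proof consists of the single line ``See \citet{kharitonov_2004} for a recent proof.'' Your proposal therefore goes well beyond what the paper does, and should be judged on its own merits.

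Your sufficiency argument is standard and essentially correct. The necessity argument, however, contains a genuine gap. You write that
\[
\dot V(\phi) = -\phi(0)^T W\phi(0) + 2\eps_1\,\phi(0)^T\sum_{i} A_i\,\phi(-h_i)
\]
can be made $\le -\eps\norm{\phi(0)}^2$ ``after absorbing the cross terms'' by taking $\eps_1$ small and $W$ large. This is false: the cross terms involve $\phi(-h_i)$, which is completely decoupled from $\phi(0)$ in $C[-h,0]$. Fix $\phi(0)$ with $A_i^T\phi(0)\neq 0$ for some $i$, and let $\phi(-h_i)=\lambda A_i^T\phi(0)$ with $\lambda\to\infty$; then $\dot V(\phi)\to+\infty$ regardless of how small $\eps_1$ is or how large $W$ is. No choice of $W$ and $\eps_1$ alone can control this.

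The standard remedy, and the one underlying the cited Kharitonov construction, is to enrich the definition of $V$ rather than merely add $\eps_1\norm{\phi(0)}^2$. One option is to replace the integrand $x(t)^T W x(t)$ by a ``complete'' quadratic form in $x(t),x(t-h_1),\dots,x(t-h_k)$ and an integral term $\int_{-h}^0 x(t+\theta)^T R(\theta) x(t+\theta)\,d\theta$; differentiating then produces negative $\norm{\phi(-h_i)}^2$ and $\int\norm{\phi(s)}^2\,ds$ terms that legitimately absorb the cross terms. An equivalent fix is to add to your $V$ corrections of the form $\sum_i\int_{-h_i}^0 \phi(s)^T S_i\,\phi(s)\,ds$, which remain in the class $X$ and whose Lie derivatives supply the missing $-\norm{\phi(-h_i)}^2$ contributions. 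Once the correction is chosen this way, the lower bound $V(\phi)\ge\eps\norm{\phi(0)}^2$ is not automatic either and requires its own argument (this is precisely the point of Kharitonov's analysis). Your outline of the smoothness claim via regularity of the fundamental matrix $U$ is, modulo this correction, the right idea.
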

\begin{proof}
  See~\citet{kharitonov_2004} for a recent proof.
\end{proof}

\section{Positivity of Integrals}\label{sec:theory}

The goal of this section to present results which enable us to
computationally find functions $V\in X$ which satisfy the positivity
conditions in~(\ref{eqn:lyapconds}) and have the form
\begin{equation*}
      V(y)=
      \int_{-h}^0
      \bmat{y(0) \\ y(t) }^T
      M(t)
      \bmat{y(0) \\ y(t) }
      \, dt.
\end{equation*}
Before stating the main result in Theorem~\ref{thm:mainpos}, we give
a few preliminary lemmas.

\begin{lem}
  \label{lem:simple}
  Suppose $f\colon [-h,0]\rightarrow \R$ is piecewise continuous. Then the
  following are equivalent.
  \begin{itemize}
  \item[(i)]
    $\displaystyle
    \int_{-h}^0 f(t) \, dt \geq 0 $\\
  \item[(ii)] There exists a  function   $g \colon [-h,0]\rightarrow \R$
    which is piecewise continuous and satisfies
    \begin{align*}
      f(t) + g(t) & \geq 0 \quad \text{for all }t, \\
      \int_{-h}^0 g(t) \, dt &=0.
    \end{align*}
  \end{itemize}
\end{lem}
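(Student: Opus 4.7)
The plan is to prove the two implications separately; both directions are elementary, and I do not anticipate a substantive obstacle.

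For the direction (ii) $\Rightarrow$ (i), I would simply integrate the pointwise inequality $f(t) + g(t) \geq 0$ over $[-h,0]$. Monotonicity of the integral yields $\int_{-h}^0 f(t)\, dt + \int_{-h}^0 g(t)\, dt \geq 0$, and since the second integral vanishes by the assumption on $g$, the first is nonnegative.

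For the converse (i) $\Rightarrow$ (ii), the natural idea is to take $g$ to be the (sign-flipped) deviation of $f$ from its average value, so that $f+g$ becomes the constant average. Concretely, I would set $c = \frac{1}{h}\int_{-h}^0 f(t)\, dt$, which is nonnegative precisely by hypothesis (i), and define $g(t) = c - f(t)$. Then $g$ inherits piecewise continuity from $f$; the pointwise sum satisfies $f(t) + g(t) = c \geq 0$ for every $t \in [-h,0]$; and by the choice of $c$,
\[
\int_{-h}^0 g(t)\, dt = hc - \int_{-h}^0 f(t)\, dt = 0.
\]
The only point that needs to be registered rather than overcome is that the nonnegativity of the constant $c$ is exactly what (i) provides, and that $h>0$ so the averaging is well-defined. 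Together these make the constructed $g$ satisfy both requirements simultaneously.
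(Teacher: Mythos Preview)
Your proof is correct and follows exactly the same approach as the paper: the direction (ii)~$\Rightarrow$~(i) is dismissed as immediate, and for (i)~$\Rightarrow$~(ii) the paper constructs the very same function $g(t) = -f(t) + \tfrac{1}{h}\int_{-h}^0 f(s)\,ds$. Your write-up is in fact more explicit than the paper's, which simply states the formula for $g$ and asserts that it works.
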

\begin{proof}
  The direction (ii)~$\implies$~(i) is immediate. To show the other direction,
  suppose (i)~holds, and let $g$ be
  \[
  g(t) = -f(t) + \frac{1}{h}\int_{-h}^0 f(s) \, ds \quad \text{for all
  }t.
  \]
  Then $g$ satisfies (ii).
\end{proof}

The second lemma shows that minimizing over continuous functions is
as good as minimizing over piecewise continuous functions.

\begin{lem}
  \label{lem:equalinfs}
  Suppose $H=\{-h_0,\dots,-h_k\}$ and let $H^c=[-h,0]\backslash H$. Let
  $f\colon [-h,0] \times \R^n \rightarrow \R$ be
  continuous on $H^c\times \R^n$, and suppose there exists a bounded
  function $z\colon [-h,0]\rightarrow \R$, continuous on $H^c$, such
  that for all~\hbox{$t\in[-h,0]$}
  \[
    f\bl(t,z(t)\br) = \inf_x f(t,x)
  \]
  Further suppose for each bounded set $X\subset \R^n$ the set
  \[
  \{\,
  f(t,x) \ \vert\
  x\in X, t\in[-h,0]
  \,\}
  \]
  is bounded. Then
  \begin{equation}
    \label{eqn:tp}
    \inf_{y\in C[-h,0]}
    \int_{-h}^{0}
    f\bl(t,y(t)\br)\, dt
    = \int_{-h}^0 \inf_x f(t,x)\, dt
  \end{equation}
\end{lem}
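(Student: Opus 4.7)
The inequality $\inf_{y\in C[-h,0]} \int_{-h}^0 f(t,y(t))\,dt \geq \int_{-h}^0 \inf_x f(t,x)\,dt$ is immediate from the pointwise bound $f(t,y(t))\geq \inf_x f(t,x)$, integrated over $[-h,0]$ and then infimized over $y\in C[-h,0]$. The substantive content is the reverse inequality, and since the pointwise minimizer $z$ need not lie in $C[-h,0]$, the plan is to exhibit continuous functions that recover the right-hand side in the limit.

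I would construct a family $y_\delta\in C[-h,0]$ that coincides with $z$ away from the exceptional points $-h_0,\dots,-h_k$ and is repaired on small $\delta$-neighborhoods of them so as to be continuous. Concretely: on each sub-interval $[-h_{i+1}+\delta,-h_i-\delta]$ set $y_\delta = z$; on each interior neighborhood $[-h_i-\delta,-h_i+\delta]$ (for $1\leq i\leq k-1$) let $y_\delta$ be the linear interpolation between $z(-h_i-\delta)$ and $z(-h_i+\delta)$, which are well defined because these boundary points lie in $H^c$; and at the two endpoints extend $y_\delta$ by constants on $[-h,-h+\delta]$ and $[-\delta,0]$. The resulting $y_\delta$ matches its neighbors at every seam and is therefore continuous on $[-h,0]$, with $\norm{y_\delta}_\infty\leq\norm{z}_\infty$.

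Split $[-h,0]$ into the set $A_\delta$ on which $y_\delta=z$ and its complement $B_\delta$, which has Lebesgue measure at most $2k\delta$. On $A_\delta$ the integrand equals $f(t,z(t))=\inf_x f(t,x)$ by hypothesis. On $B_\delta$, both $y_\delta(t)$ and $z(t)$ lie in the bounded set $X=\{x\in\R^n:\norm{x}\leq\norm{z}_\infty\}$, so the boundedness hypothesis supplies a constant $C$ with $\abs{f(t,y_\delta(t))}\leq C$ and $\abs{f(t,z(t))}\leq C$ throughout $B_\delta$. Consequently $\int_{-h}^0 f(t,y_\delta(t))\,dt$ differs from $\int_{-h}^0 f(t,z(t))\,dt=\int_{-h}^0\inf_x f(t,x)\,dt$ by at most $4kC\delta$, and the reverse inequality follows by letting $\delta\to 0^+$.

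The main obstacle is precisely this approximation step. We have no continuity information about $f$ at points of $H$ and no control on how wildly $z$ may oscillate near $H$, so no uniform-continuity argument is available on the repair intervals. The hypothesis that $f$ is bounded on $[-h,0]\times X$ for every bounded $X\subset\R^n$ is exactly what salvages the argument, since it yields a uniform $L^\infty$ bound on the integrand regardless of the pointwise behavior of $y_\delta$ or $z$; a secondary check is that both $f(t,y_\delta(t))$ and $f(t,z(t))$ are bounded and continuous on $H^c$ and hence Riemann integrable on $[-h,0]$.
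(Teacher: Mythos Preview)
Your proposal is correct and follows essentially the same approach as the paper: approximate the piecewise-continuous minimizer $z$ by continuous functions that agree with $z$ outside shrinking neighborhoods of the exceptional points, then invoke the boundedness hypothesis on $f$ over $[-h,0]\times X$ (with $X$ a ball containing the range of $z$) to show the error on those neighborhoods vanishes as their measure goes to zero. The paper indexes the repair neighborhoods by $n\in\N$ rather than $\delta$ and only explicitly repairs at the interior points $-h_1,\dots,-h_{k-1}$, but the argument is otherwise identical; if anything, your explicit treatment of the two endpoints is slightly more careful.
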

\begin{proof}
  Let
  \[
  K =  \int_{-h}^0 \inf_x f(t,x)\, dt
  \]
  It is easy to see that
  \[
  \inf_{y\in C[-h,0]}
  \int_{-h}^{0}
  f\bl(t,y(t)\br)\, dt
  \geq K
  \]
  since if not there would exist some continuous function $y$ and some
  interval on which
  \[
  f\bl(t,y(t)\br) < \inf_x f(t,x)
  \]
  which is clearly impossible.

  We now show that the left-hand side of~(\ref{eqn:tp}) is also less
  than or equal to $K$, and hence equals $K$.  We need to show that
  for any $\eps>0$ there exists $y\in C[-h,0]$ such that
  \[
  \int_{-h}^0 f\bl(t,y(t)\br) \, dt < K + \eps.
  \]
  To do this, for each $n\in \N$ define the set $H_n \subset\R$ by
  \[
  H_n = \bigcup_{i=1}^{k-1}
  ( h_i - \alpha/n, h_i+\alpha/n )
  \]
  and choose $\alpha>0$ sufficiently small so that $H_1\subset
  (-h,0)$.  Let $z$ be as in the hypothesis of the lemma, and pick $M$
  and $R$ so that
  \begin{align*}
    M &> \sup_{t\in [-h,0]} \norm{z(t)}\\
    R &= \sup
    \bl\{\,
    \abs{f(t,x)}
    \ \vertt\
    t\in[-h,0], \norm{x}\leq M
    \,\br\}.
  \end{align*}
  For each $n$ choose a continuous function $x_n:[-h,0]\rightarrow
  \R^n$ such that $x_n(t) = z(t)$ for all $t\not\in H_n$ and
  \[
  \sup_{t\in[-h,0]} \norm{x_n(t)} < M
  \]
  This is possible, for example, by linear interpolation.  Now we
  have, for the continuous function $x_n$
  \begin{align*}
    \int_{-h}^0 f\bl(t,x_n(t)\br) \, dt
    &= K + \int_{-h}^0
    \bbl(
    \hskip-1pt
    f\bl(t,x_n(t)\br) - f\bl(t,z(t) \br)
    \hskip-1pt
    \bbr) \, dt\\
    &= K + \int_{H_n}
    \bbl(
    \hskip-1pt
    f\bl(t,x_n(t)\br) - f\bl(t,z(t)\br)
    \hskip-1pt
    \bbr) \, dt\\
    & \leq K + 4R\alpha (k-1)/n
  \end{align*}
  This proves the desired result.
\end{proof}

The following lemma states that when the $\arg \min_z f(t,z)$ is
piecewise continuous in $t$, then we have the desired result.

\begin{lem}
  \label{lem:fz}
  Suppose $f\colon [-h,0] \times \R^n \rightarrow \R$ and the hypotheses of
  Lemma~\ref{lem:equalinfs} hold. Then the following are equivalent.
  \begin{itemize}
  \item[(i)]
    For all $y\in C[-h,0]$
    \[
    \int_{-h}^{0} f\bl(t,y(t)\br)\, dt \geq 0.
    \]
  \item[(ii)] There exists $g:[-h,0]\rightarrow \R$ which is piecewise
    continuous and satisfies
     \begin{align*}
      f(t,z) + g(t) & \geq 0 \quad \text{for all }t,  z\\
      \int_{-h}^0 g(t) \, dt &=0.
    \end{align*}
  \end{itemize}
\end{lem}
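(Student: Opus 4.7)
The plan is to combine the two previous lemmas: Lemma~\ref{lem:equalinfs} reduces the functional inequality in~(i) to a scalar integral inequality involving $\tilde f(t) := \inf_x f(t,x) = f\bl(t,z(t)\br)$, and Lemma~\ref{lem:simple} then yields the decomposition required in~(ii).

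First I would dispatch the easy direction (ii)~$\implies$~(i). Given any $y\in C[-h,0]$, the pointwise inequality $f(t,z) + g(t) \geq 0$ specialized to $z = y(t)$ integrates to
\[
\int_{-h}^0 f\bl(t,y(t)\br)\,dt \;\geq\; -\int_{-h}^0 g(t)\,dt \;=\; 0.
\]

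For (i)~$\implies$~(ii), I would apply Lemma~\ref{lem:equalinfs}. Its conclusion combined with (i) yields
\[
\int_{-h}^0 \inf_x f(t,x)\,dt \;=\; \inf_{y\in C[-h,0]} \int_{-h}^0 f\bl(t,y(t)\br)\,dt \;\geq\; 0.
\]
Now define $\tilde f\colon[-h,0]\to\R$ by $\tilde f(t) = f\bl(t,z(t)\br)$, where $z$ is the function supplied by the hypothesis. Since $f$ is continuous on $H^c\times\R^n$ and $z$ is bounded and continuous on $H^c$, the composition $\tilde f$ is continuous on $H^c$; since $z$ is bounded and $f$ is bounded on bounded sets, the one-sided limits of $\tilde f$ at the points of $H$ exist as well, so $\tilde f$ is piecewise continuous. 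Apply Lemma~\ref{lem:simple} to $\tilde f$ to obtain a piecewise continuous $g\colon[-h,0]\to\R$ with
\[
\tilde f(t) + g(t) \geq 0 \quad\text{for all } t, \qquad \int_{-h}^0 g(t)\,dt = 0.
\]
Since $\tilde f(t) = \inf_x f(t,x) \leq f(t,z)$ for every $z\in\R^n$, the same $g$ satisfies $f(t,z) + g(t) \geq 0$ for all $t$ and $z$, establishing~(ii).

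The only place requiring care is verifying that $\tilde f$ is piecewise continuous so that Lemma~\ref{lem:simple} applies; this is precisely where the hypothesis that the pointwise minimizer $z$ is bounded and continuous on $H^c$ enters, together with the local boundedness of $f$ that provides finite one-sided limits of $\tilde f$ at the jump points $H$. Once this regularity is in hand, the proof is a direct composition of the two earlier lemmas, with no further technical work.
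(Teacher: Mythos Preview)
Your proof is correct and matches the paper's approach exactly: use Lemma~\ref{lem:equalinfs} to convert (i) into $\int_{-h}^0 \inf_x f(t,x)\,dt \geq 0$, then apply Lemma~\ref{lem:simple} to the pointwise infimum (which the paper calls $r$) to produce $g$. The paper is even terser on the regularity of $r$, asserting only continuity on $H^c$ before invoking Lemma~\ref{lem:simple}; your remark that boundedness alone yields one-sided limits at $H$ is not strictly a valid inference, but neither argument dwells on this point.
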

\begin{proof}
  Again we only need to show that (i) implies (ii). Suppose (i) holds,
  then
  \[
  \inf_{y\in C[-h,0]}
  \int_{-h}^{0}
  f\bl(t,y(t)\br)\, dt \geq 0
  \]
  and hence by Lemma~\ref{lem:equalinfs} we have
  \[
  \int_{-h}^0 r(t) \, dt \geq 0
  \]
  where $r:[-h,0]\rightarrow \R^n$  is given by
  \[
  r(t)=\inf_x f(t,x) \quad \text{for all }t.
  \]
  The function $r$ is continuous on $H^c$ since $f$ is continuous on
  $H^c\times \R^n$.  Hence by Lemma~\ref{lem:simple}, there exists
  $g$ such that condition (ii) holds, as
  desired.
\end{proof}

We now specialize the result of Lemma~\ref{lem:fz} to the case of
quadratic functions. It is shown that in this case, that under
certain conditions, the $\arg \min_z f(t,z)$ is piecewise
continuous.
\begin{thm}
  \label{thm:mainpos}
  Suppose $M:[-h,0]\rightarrow \S^{m+n}$ is piecewise continuous, and
  there exists $\eps>0$ such that for all~\hbox{$t\in[-h,0]$} we have
  \begin{align*}
    M_{22}(t) & \geq \eps I
  \end{align*}
where $M$ is partitioned as
\[
    M=\bmat{M_{11}&M_{12}\\M_{21}&M_{22}}
\]
with $M_{22}:[-h,0]\rightarrow \S^{n}$. Then the following are
equivalent.
  \begin{itemize}
  \item[(i)] For all $x\in\R^m$ and continuous $y:[-h,0]\rightarrow
    \R^n$
    \begin{equation}
      \label{eqn:xy}
      \int_{-h}^0
      \bmat{x \\ y(t) }^T
      M(t)
      \bmat{x \\ y(t) }
      \, dt \geq 0
    \end{equation}
  \item[(ii)] There exists a function $T:[-h,0]\rightarrow \S^m$ which
    is piecewise continuous and satisfies
    \begin{align*}
      M(t)  + \bmat{ T(t) & 0 \\ 0 & 0 } & \geq 0
      \quad \text{for all }t\in[-h,0]
      \\
      \int_{-h}^0 T(t) \, dt &=0
    \end{align*}
  \end{itemize}
\end{thm}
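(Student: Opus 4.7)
The plan is to reduce the matrix integral condition to a pointwise matrix condition via the Schur complement
\[
  S(t) := M_{11}(t) - M_{12}(t) M_{22}(t)^{-1} M_{21}(t),
\]
which is well-defined because $M_{22}(t) \succeq \eps I$, and then to apply Lemmas~\ref{lem:simple} and~\ref{lem:equalinfs} to the scalar quadratic $x^T S(t) x$. The direction (ii)~$\Rightarrow$~(i) is a one-line calculation: test the pointwise matrix inequality against $[x;\, y(t)]$, integrate, and use $\int_{-h}^0 T(t)\,dt = 0$ to kill the added term. The substantive content is the converse.

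For (i)~$\Rightarrow$~(ii), I fix $x \in \R^m$ and apply Lemma~\ref{lem:equalinfs} to the scalar function $f(t,y) := \bmat{x \\ y}^T M(t) \bmat{x \\ y}$. Because $M_{22}(t) \succeq \eps I$, completing the square in $y$ yields the unique pointwise minimizer $z_x(t) = -M_{22}(t)^{-1} M_{21}(t)\,x$, which is bounded and continuous on $H^c$ since $M$ is piecewise continuous and $M_{22}^{-1}$ is uniformly bounded; the minimum value is $f(t,z_x(t)) = x^T S(t) x$. The other hypotheses of Lemma~\ref{lem:equalinfs} (continuity of $f$ on $H^c \times \R^n$ and uniform boundedness of $f$ on bounded sets) follow directly from piecewise continuity of $M$. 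The lemma then gives
\[
  \inf_{y \in C[-h,0]} \int_{-h}^0 \bmat{x \\ y(t)}^T M(t) \bmat{x \\ y(t)}\,dt = \int_{-h}^0 x^T S(t)\, x \, dt,
\]
so condition (i), applied for every $x \in \R^m$, forces $\int_{-h}^0 S(t)\,dt \succeq 0$.

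I then borrow the averaging trick from the proof of Lemma~\ref{lem:simple} and define $T(t) := -S(t) + \frac{1}{h}\int_{-h}^0 S(s)\,ds$; this $T$ is piecewise continuous, integrates to zero, and satisfies $S(t) + T(t) = \frac{1}{h}\int_{-h}^0 S(s)\,ds \succeq 0$ for all $t$. A reverse Schur-complement argument, once more exploiting $M_{22}(t) \succ 0$, converts this into the claimed pointwise matrix inequality $M(t) + \bmat{T(t) & 0 \\ 0 & 0} \succeq 0$. The main obstacle is the passage from the scalar-valued framework of Lemmas~\ref{lem:simple}--\ref{lem:fz} to a matrix-valued conclusion; the Schur complement resolves this cleanly by absorbing the quantifier over continuous $y$ into a single matrix $S(t)$ whose integrated nonnegativity is equivalent to (i) holding uniformly in $x$.
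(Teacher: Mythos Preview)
Your proposal is correct and follows essentially the same route as the paper's proof: fix $x$, invoke Lemma~\ref{lem:equalinfs} with the pointwise minimizer $z_x(t)=-M_{22}(t)^{-1}M_{21}(t)x$ to reduce (i) to nonnegativity of $\int_{-h}^0 x^T S(t)x\,dt$, then use the averaging construction from Lemma~\ref{lem:simple} to build $T$. The only cosmetic difference is that the paper routes the argument through Lemma~\ref{lem:fz} at the scalar level and then observes that the resulting $g(t)$ is quadratic in $x$, whereas you work directly with the matrix Schur complement $S(t)$ and close with a standard Schur-complement equivalence; the two presentations are interchangeable.
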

\begin{proof}
  Again we only need to show (i) implies (ii).  Suppose $x\in \R^n$,
  and define
  \[
  f(t,z) =
  \bmat{x \\ z}^T
  M(t)
  \bmat{x \\ z}
  \quad \text{for all }t,z.
  \]
  Since by the hypothesis $M_{22}$ has a lower bound, it is invertible
  for all $t$ and its inverse is piecewise continuous.  Therefore
  $z(t) = -M_{22}(t)^{-1} M_{21}(t) x$ is the unique minimizer of
  $f(t,z)$ with respect to $z$. By the hypothesis (i), we have that for all
  $y\in C[-h, 0]$
  \[
  \int_{-h}^0 f\bl(t,y(t)\br) \, dt  \geq 0.
  \]
  Hence by Lemma~\ref{lem:fz} there exists a function $g$ such that
  \begin{equation}
    \label{eqn:gfconds}
    \begin{aligned}
      g(t) + f(t,z) & \geq 0  \quad \text{for all } t,z \\
      \int_{-h}^0 g(t) \, dt &=0.
    \end{aligned}
  \end{equation}
  The proof of Lemma~\ref{lem:simple}  gives one such function as
  \[
  g(t) = - f\bl(t,z(t)\br)
  + \frac{1}{h}\int_{-h}^0
  f(s,z(s))
  \,dt.
  \]
  We have
  \[
  f\bl(t,z(t)\br)
  = x^T \bl(M_{11}(t) - M_{12}(t) M_{22}^{-1} (t) M_{21}(t) \br) x
  \]
  and therefore $g(t)$ is a quadratic function of $x$, say $ g(t) =
  x^T T(t) x$, and $T:[-h,0] \rightarrow \S^m$ is continuous on $H^c$.
  Then equation~(\ref{eqn:gfconds}) implies
  \[
  x^T T(t) x +  \bmat{x \\ z}^T M(t) \bmat{x \\ z} \geq 0
  \quad \text{for all }t,z,x
  \]
  as required.
\end{proof}

Notice that the strict positivity assumption on $M_{22}$ in
Theorem~\ref{thm:mainpos} is implied by the existence of an
$\epsilon > 0$ such that
\[
    V(x)\ge \epsilon \norm{x}_2^2,
\]
where $\norm{\cdot}_2$ denotes the $L_2$-norm.

We have now shown that the convex cone of functions $M$ such that
the first term of~\eqref{eqn:vclass} is nonnegative is exactly equal
to the sum of the cone of pointwise nonnegative functions and the
linear space of functions whose integral is zero. Note that
in~(\ref{eqn:xy}) the vectors $x$ and $y$ are allowed to vary
independently, whereas~(\ref{eqn:vclass}) requires that $x=y(0)$. It
is however straightforward to show that this additional constraint
does not change the result, using the technique in the proof of
Lemma~\ref{lem:equalinfs}.

The key benefit of this is that it is easy to parametrize the latter
class of functions, and in particular when $M$ is a polynomial these
constraints are semidefinite representable constraints on the
coefficients of $M$.


\section{Lie Derivatives}

In this section we will take the opportunity to define the
relationship between the functions $M$ and $N$ which define the
Lyapunov function $V$, and the functions $D$ and $E$, which define
the Lie derivative of the Lyapunov function $\dot{V}$.

\subsection{Single Delay Case}

We first present the single delay case, as it will illustrate the
formulation in the more complicated case of several delays.  Suppose
that $V\in X$ is given by~(\ref{eqn:vclass}), where
$M:[-h,0]\rightarrow \S^{2n}$ and $N:[-h,0]\times [-h,0]\rightarrow
\R^{n \times n}$. Since there is only one delay, if the system is
exponentially stable then there always exists a Lyapunov function of
this form with continuous functions $M$ and $N$.  Then the Lie
derivative of $V$ is
\begin{align*}
  \label{eqn:vdotcts}
  \dot{V}(\phi) = \int_{-h}^{0}
  \bmat{\phi(0) \\ \phi(-h) \\ \phi(s) }^T
  D(s)
  \bmat{\phi(0) \\ \phi(-h) \\   \phi(s) }
  \, ds
+  \int_{-h}^{0}  \int_{-h}^{0} \phi(s)^T E(s,t) \phi(t) \, ds \,
dt.
\end{align*}
Partition $D$ and $M$ as
\[
M(t) = \bmat{M_{11} & M_{12}(t) \\
  M_{21}(t) & M_{22}(t) }
\quad D(t) = \bmat{
  D_{11} & D_{12}(t)\\
  D_{21}(t) & D_{22}(t)}
\]
so that $M_{11}\in \S^{n}$ and $D_{11}\in \S^{2n}$.  Without loss of
generality we have assumed that $M_{11}$ and $D_{11}$ are constant.
The functions $D$ and $E$ are linearly related to $M$ and $N$ by
 \begin{align*}
    D_{11}  &=
    \bmat{A_0^T M_{11} + M_{11} A_0  &
      M_{11}A_1 \\[1mm]
      A_1^T M_{11}
      &  0}
    \\
    & + \frac{1}{h}
    \bmat{
      M_{12}(0) + M_{21}(0)    &
      - M_{12}(-h)   \\[1mm]
      - M_{21}(-h) & 0
    }
    \\
    &+\frac{1}{h}
    \bmat{ M_{22}(0)  &
      0 \\
      0 &
      - M_{22}(-h)
    }
    \\
    D_{12}(t) &=
    \bmat{
      A_0^T M_{12}(t) - \dot{M}_{12}(t) + N(0,t) \\[1mm]
      A_1^T M_{12}(t) - N(-h,t)
    }
    \\[2mm]
    D_{22}(t)  &= - \dot{M}_{22}(t)
    \\[2mm]
    E(s,t) &= \frac{\partial N(s,t)}{\partial s}  +
    \frac{\partial N (s,t)}{\partial t}.
  \end{align*}

\subsection{Multiple-delay case}

We now define the class of functions under consideration for the
Lyapunov functions. Define the intervals
\[
H_i = \begin{cases}
[-h_1,0] & \text{if }i=1\\
[-h_i,-h_{i-1}) & \text{if }i=2,\dots,k.
\end{cases}
\]
For the Lyapunov function $V$, define the sets of functions
\begin{align*}
  Y_1&=\bbl\{\,
  M:[-h,0] \rightarrow \S^{2n}
  \ \vert\ \\
  & \quad \qquad M_{11}(t) = M_{11}(s) && \hspace{-.5cm}\text{for all }s,t\in[-h,0] \\
  & \quad \qquad M \text{ is $C^1$ on $H_i$} && \hspace{-.5cm}\text{for all }i=1,\dots,k
  \  \,\bbr\}
  \\
  Y_2&=\bbl\{\,
  N:[-h,0]\times [-h,0] \rightarrow \S^{n}
  \ \vert\ \\
  & \quad \qquad N(s,t) = N(t,s)^T && \hspace{-.5cm}\text{for all }s,t\in[-h,0] \\
  & \quad \qquad N \text{ is $C^1$ on $H_i\times H_j$} &&  \hspace{-.5cm}\text{for all }
  i,j=1,\dots,k
  \ \bbr\}
\end{align*}
and for its derivative, define
\begin{align*}
  Z_1&=\bbl\{\,
  D:[-h,0] \rightarrow \S^{(k+2)n}
  \ \vert\ \\
  & \quad \qquad D_{ij}(t) = D_{ij}(s) && \hspace{-.5cm}\text{for all }s,t\in[-h,0] \\
  & && \text{for }i,j=1,\dots,3\\
  & \quad \qquad D \text{ is $C^0$ on $H_i$} && \hspace{-.5cm}\text{for all }i=1,\dots,k
  \  \,\bbr\}
  \\
  Z_2&=\bbl\{\,
  E:[-h,0]\times [-h,0] \rightarrow \S^{n}
  \ \vert\ \\
  & \quad \qquad E(s,t) = E(t,s)^T && \hspace{-.5cm}\text{for all }s,t\in[-h,0] \\
  & \quad \qquad E \text{ is $C^0$ on $H_i\times H_j$} &&  \hspace{-.5cm}\text{for all }
  i,j=1,\dots,k
  \ \bbr\}
\end{align*}
Here $M \in Y_1$ is partitioned according to
\begin{equation} \label{eqn:partm}
  M(t) = \bmat{
    M_{11}    &    M_{12}(t) \\
    M_{21}(t) &    M_{22}(t)
  },
\end{equation}
where $M_{11}\in \S^{n}$ and $D\in Z_1$ is partitioned according to
\begin{equation}
  \label{eqn:partd}
  D(t) = \bmat{
    D_{11}    & D_{12}    & D_{13} &    D_{14}(t) \\
    D_{21}    & D_{22}    & D_{23} &    D_{24}(t) \\
    D_{31}    & D_{32}    & D_{33} &    D_{34}(t) \\
    D_{41}(t) & D_{42}(t) & D_{43}(t) & D_{44}(t)
  }
\end{equation}
where $D_{11}, D_{33}, D_{44}\in\S^{n}$ and $D_{22}\in \S^{(k-1)n}$.
Let $Y=Y_1\times Y_2$ and $Z=Z_1 \times Z_2$. Notice that if $M\in
Y_1$, then $M$ need not be continuous at $h_i$ for $1\leq i \leq
k-1$, however, we require it be right continuous at these points. We
also define the derivative $\dot{M}(t)$ at these points to be the
right-hand derivative of $M$.  We define the continuity and
derivatives of functions in $Y_2, Z_1$ and $Z_2$ similarly.

We define the jump values of $M$ and $N$ at the discontinuities as
follows.
\[
\Delta M(h_i) = \lim_{t\rightarrow (-h_i)_+} M(t) -
\lim_{t\rightarrow (-h_i)_-} M(t)
\]
for each $i=1,\dots,k-1$, and similarly define
\[
\Delta N(h_i,t) = \lim_{s\rightarrow (-h_i)_+}N(s,t) -
\lim_{s\rightarrow (-h_i)_-}N(s,t)
\]
\begin{defn}
  Define the map $L: Y \rightarrow Z$ by $(D,E)=L(M,N)$ if for all
  $t,s\in [-h,0]$ we have
  \begin{align*}
    D_{11}  &=
    A_0^T M_{11} + M_{11}A_0 \\
     & \qquad + \frac{1}{h} \bl(
    M_{12}(0) + M_{21}(0) + M_{22}(0) \br)
    \\[1mm]
    D_{12}  &= \bmat{M_{11} A_1& \cdots & M_{11} A_{k-1}} \\
    & \qquad - \bmat{ \Delta M_{12}(h_1) & \cdots &
      \Delta M_{12}(h_{k-1})}
    \\[1mm]
    D_{13}  &= \frac{1}{h} \bl(
    M_{11} A_k - M_{12}(-h)
    \br)
     \\[1mm]
    D_{22}  & = \frac{1}{h} \diag\bl(-\Delta M_{22}(h_1),
    \dots, -\Delta M_{22}(h_{k-1})\br)
    \\[1mm]
    D_{23}  &= 0
    \\[1mm]
    D_{33}  &= -\frac{1}{h} M_{22}(-h)
    \\[1mm]
    D_{14}(t) & = N(0,t) + A_0^T M_{12}(t) - \dot{M}_{12}(t)
    \\[1mm]
    D_{24}(t) &= \bmat{ \Delta N (-h_1,t) + A_1^T M_{12}(t) \\
      \vdots\\
      \Delta N(-h_{k-1},t) + A_{k-1}^T M_{12}(t) }
    \\[1mm]
    D_{34}(t) &= A_k^T M_{12}(t) - N (-h,t)
    \\[1mm]
    D_{44}(t) &= -\dot{M}_{22}(t)
  \end{align*}
  and
  \[
  E(s,t) = \frac{\partial N(s,t)}{\partial s}  +
  \frac{\partial N (s,t)}{\partial t}.
  \]
  Here $M$ is partitioned as in~(\ref{eqn:partm}), $D$ is partitioned as in~\eqref{eqn:partd}, and the remaining
  entries are defined by symmetry.
\end{defn}

The map $L$ is the Lie derivative operator applied to the set of
functions specified by~(\ref{eqn:vclass}); this is stated precisely
below. Notice that this implies that $L$ is a linear map.

\begin{lem}
  \label{lem:derivative}
  Suppose $M\in Y_1$ and $N\in Y_2$ and $V$ is given by~(\ref{eqn:vclass}).
  Let $(D,E)=L(M,N)$.  Then the Lie derivative of $V$ is given by
  \begin{align}
    \label{eqn:vdot}
    \dot{V}(\phi) = \int_{-h}^{0}
    \bmat{ \phi(-h_0)   \\ \vdots \\  \phi(-h_k) \\ \phi(s) }^T
  D(s)
  \bmat{ \phi(-h_0)   \\ \vdots \\  \phi(-h_k) \\ \phi(s) }
  \, ds
+  \int_{-h}^{0}  \int_{-h}^{0} \phi(s)^T E(s,t) \phi(t) \, ds \,
dt.
\end{align}
\end{lem}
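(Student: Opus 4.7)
The plan is to compute $\dot V(\phi) = \frac{d}{dt} V(x_t)\big\vert_{t=0^+}$ directly, where $x = G\phi$ is the solution of~\eqref{eqn:sys}--\eqref{eqn:bc} and $x_t(s) = x(t+s)$. For $\phi$ regular enough to differentiate through the integrals (the general case following by density in $C[-h,0]$), the ODE gives $\dot x(0^+) = \sum_{i=0}^k A_i \phi(-h_i)$. The convenient reformulation is the change of variables $u = t+s$, $v = t+\tau$, so that
\begin{align*}
V_1(x_t) &= \int_{t-h}^t \bmat{x(t)\\ x(u)}^T M(u-t) \bmat{x(t)\\ x(u)}\, du, \\
V_2(x_t) &= \int_{t-h}^t \int_{t-h}^t x(u)^T N(u-t,v-t) x(v)\, du\, dv,
\end{align*}
and $t$ appears only in the integration limits and in the shifted arguments of $M, N$, to which Leibniz's rule and the chain rule apply.

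I would then split each outer integral into the $k$ subintervals on which $M$ (respectively $N$) is smooth, and apply Leibniz on each piece, producing three types of contributions. First, the outer boundary terms at $u = 0, -h$ (and $v = 0, -h$ in $V_2$) yield the evaluations $M_{12}(0), M_{22}(0), M_{12}(-h), M_{22}(-h), N(0,\cdot), N(-h,\cdot)$ that populate $D_{11}, D_{13}, D_{14}, D_{33}, D_{34}$. Second, the interior endpoints at $u = -h_i$ and $v = -h_i$ for $i = 1,\dots,k-1$ yield the jump contributions $\Delta M_{12}(h_i), \Delta M_{22}(h_i), \Delta N(-h_i,\cdot)$ populating $D_{12}, D_{22}, D_{24}$. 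Third, differentiation of the shifted arguments inside the integrand gives the interior terms $-\dot M_{12}(s), -\dot M_{22}(s)$ contributing to $D_{14}, D_{44}$, together with the term $\partial_s N + \partial_t N = E(s,t)$ inside the double integral. Finally, the remaining $\dot x(t)$ factors arising from the two explicit $x(t)$ slots in $V_1$ are replaced via the ODE by $\sum_i A_i \phi(-h_i)$, producing the blocks $A_0^T M_{11} + M_{11} A_0$ in $D_{11}$, $M_{11} A_i$ in $D_{12}, D_{13}$, and $A_i^T M_{12}(s)$ in $D_{14}, D_{24}, D_{34}$.

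The main obstacle will be the careful bookkeeping: assigning each resulting piece to the correct block under the ordering $(\phi(-h_0),\ldots,\phi(-h_k),\phi(s))$, tracking signs across the $k$ Leibniz applications and the identification $\dot x(t+s) = \partial_s x(t+s)$, and in particular verifying the factors $\tfrac{1}{h}$ that must appear on every constant-in-$s$ entry of $D$ so that the outer $\int_{-h}^0 ds$ in \eqref{eqn:vdot} recovers the correct magnitude (for example, the constant block $M_{11} A_i$ of $D_{12}$ absorbs such a factor because the underlying term in $\dot V$ has no integral of its own). Once every contribution has been assigned, the identification $(D, E) = L(M, N)$ follows by block-by-block comparison with the definition of $L$, and the lemma is established.
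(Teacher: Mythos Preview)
Your proposal is correct and is precisely the computation the paper has in mind: its own proof is the single sentence ``straightforward by differentiation and integration by parts of~(\ref{eqn:vclass}),'' and your plan---Leibniz on each smooth subinterval, collection of outer boundary, interior jump, and interior derivative terms, substitution of the ODE for $\dot x(0^+)$, and the $1/h$ normalization of the constant blocks---is exactly that computation spelled out. The block-by-block assignments you list all match the definition of $L$, so carrying out the bookkeeping as you describe completes the proof.
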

\begin{proof}
  The proof is straightforward by differentiation and integration
  by parts of~(\ref{eqn:vclass}).
\end{proof}


\section{Polynomial Matrices}

In this paper we use piecewise polynomial matrices as a conveniently
parametrized class of functions to represent the functions $M$ and $N$
defining the Lyapunov function~(\ref{eqn:vclass}) and its derivative.
Theorem~\ref{thm:mainpos} has reduced nonnegativity of the first term
of~(\ref{eqn:vclass}) to pointwise nonnegativity of a piecewise
polynomial matrix in one variable.

We first make some definitions which we will use in this paper; some
details on polynomial matrices may be found in~\citet{scherer_2004}
and~\citet{kojima_2003}.  We consider polynomials in $n$ variables.
As is standard, for $\alpha\in \N^n$ define the monomial in $n$
variables $x^\alpha$ by $ x^\alpha = x_1^{\alpha_1} \cdots
x_n^{\alpha_n}$.  We say $M$ is a real polynomial matrix in $n$
variables if for some finite set $W\subset \N^n$ we have
\[
M(x)= \sum_{\alpha \in W} A_\alpha x^\alpha
\]
where $A_\alpha$ is a real matrix for each $\alpha\in W$.  A
convenient representation of polynomial matrices is as a quadratic
function of monomials. Suppose $z$ is a vector of monomials in the
variables $x$, such as
\[
z(x) = \bmat{1 \\ x_1 \\ x_1 x_2^2 \\ x_3^4  }
\]
For convenience, assume the length of $z$ is $d+1$.  Let
$Q\in\S^{n(d+1)}$ be a symmetric matrix. Then the function $M$
defined by
\begin{equation}
  \label{eqn:quadmono}
  M(x) = (I_n \otimes z(x))^T Q (I_n \otimes z(x))
\end{equation}
is an $n\times n$ symmetric polynomial matrix, and every real
symmetric polynomial matrix may be represented in this way for some
monomial vector $z$.  If we partition $Q$ as
\[
Q=\bmat{Q_{11} &  \hdots & Q_{1n}\\
  \vdots&&\vdots \\
  Q_{n1} &  \hdots & Q_{nn}}.
\]
where each $Q_{ij}\in\R^{(d+1)\times (d+1)}$, then the $i,j$ entry of
$M$ is
\[
M_{ij}(x) = z(x)^T Q_{ij} z(x).
\]
Given a polynomial matrix $M$, it is called a \emph{sum of squares}
if there exists a vector of monomials $z$ and a positive
semidefinite matrix $Q$ such that equation~\eqref{eqn:quadmono}
holds. In this case,
\[
M(x) \succeq 0 \qquad \text{for all }x
\]
and therefore the existence of such a $Q$ is a sufficient condition
for the polynomial $M$ to be globally pointwise positive semidefinite.
A matrix polynomial in one variable is pointwise nonnegative
semidefinite on the real line if and only if it is a sum of squares;
see~\citet{choi_1980}. Given a matrix polynomial $M(x)$, we can test
whether it is a sum-of-squares by testing whether there is a matrix
$Q$ such that
\begin{align}
  \label{eqn:equatecoeffs}
  M(x) &=  (I_n \otimes z(x))^T Q (I_n \otimes z(x))\\
  Q  & \succeq 0,\notag
\end{align}
where $z$ is the vector of all monomials with degree half the degree
of $M$. Equation~(\ref{eqn:equatecoeffs}) is interpreted as equality
of polynomials, and equating their coefficients gives a finite set
of linear constraints on the matrix $Q$. Therefore to find such a
$Q$ we need to find a positive semidefinite matrix subject to linear
constraints, and this is therefore testable via semidefinite
programming. See~\citet{vandenberghe_1996} for background on
semidefinite programming.

\subsection{Piecewise polynomial matrices}

A matrix-valued function $M:[-h,0]\rightarrow \S^n$ is called a
\emph{piecewise polynomial matrix} if for each $i=1,\dots,k$ the
function $M$ restricted to the interval $H_i$ is a polynomial matrix.
We represent such piecewise polynomial matrices as follows.  Define
the vector of indicator functions \hbox{$g:[-h,0]\rightarrow \R^k$} by
\[
g_i(t) = \begin{cases}
  1 & \text{if } t \in H_i \\
  0 & \text{otherwise}
\end{cases}
\]
for all $i=1,\dots,k$ and all $t\in[-h,0]$. Let $z(t)$ be the vector
of monomials
\[
z(t) = \bmat{1 \\ t \\ t^2 \\ \vdots \\ t^d}
\]
and for convenience also define the function $Z_{n,d}:[-h,0]
\rightarrow \R^{{nk(d+1)}\times n}$ by
\[
Z_{n,d}(t) =  g(t)\otimes I_n \otimes z(t).
\]
Then it is straightforward to show that $M$ is a piecewise matrix
polynomial if and only if there exist matrices $Q_i \in \S^{n(d+1)}$
for $i=1,\ldots,k$ such that
\begin{equation}
  \label{eqn:piecequadmono}
  M(t) =
  Z_{n,d}(t)^T  \diag (Q_1,\dots,Q_k)
  Z_{n,d}(t).
\end{equation}
The function $M$ is pointwise positive semidefinite, i.e.,
\[
M(t) \succeq 0 \qquad \text{for all }t\in [-h,0]
\]
if there exists positive semidefinite matrices $Q_i$
satisfying~\eqref{eqn:piecequadmono}. We refer to such functions as
\emph{piecewise sum of squares matrices}, and define the set of such
functions
\begin{align*} &\Sigma_{n,d} = \bl\{\, Z_{n,d}^T(t) Q
Z_{n,d}(t) \ \vert\ \\
& \qquad \qquad Q=\diag (Q_1,\dots,Q_k),\, Q_i \in\S^{n(d+1)},
Q_i\succeq 0 \,\br\}.
\end{align*}
If we are given a function $M:[-h,0]\rightarrow \S^n$ which is
piecewise polynomial and want to know whether it is piecewise sum of
squares, then this is computationally checkable using semidefinite
programming. Naturally, the number of variables involved in this
task scales as $k n^2(d+1)^2$ when the degree of $M$ is $2d$.

\subsection{Piecewise Polynomial Kernels}

We consider functions $N$ of two variables $s,t$ which we will use as
a kernel in the quadratic form
\begin{equation}
  \label{eqn:qform}
  \int_{-h}^{0}  \int_{-h}^{0} \phi(s)^T N(s,t)\, \phi(t) \, ds \, dt
\end{equation}
which appears in the Lyapunov function~(\ref{eqn:vclass}). A
polynomial in two variables is referred to as a \emph{binary}
polynomial. A function $N:[-h,0] \times [-h,0] \rightarrow \S^n$ is
called a \emph{binary piecewise polynomial matrix} if for each
$i,j\in\{1,\dots,k\}$ the function $N$ restricted to the set $H_i
\times H_j$ is a binary polynomial matrix. It is straightforward to
show that $N$ is a symmetric binary piecewise polynomial matrix if
and only if there exists a matrix $Q\in\S^{nk(d+1)}$ such that
\[
N(s,t) =  Z_{n,d}^T(s) Q Z_{n,d}(t).
\]
Here $d$ is the degree of $N$ and recall
\[
Z_{n,d}(t) =  g(t) \otimes I_n \otimes z(t).
\]
We now proceed to characterize the binary piecewise polynomial
matrices $N$ for which the quadratic form~(\ref{eqn:qform}) is
nonnegative for all $\phi\in C([-h,0],\R^n)$.  We first state the
following Lemma.

\begin{lem}
  \label{lem:rank}
  Suppose $z$ is the vector of monomials
  \[
  z(t) = \bmat{1 \\ t \\ t^2 \\ \vdots \\ t^d}
  \]
  and the linear map $A:C[0,1]\rightarrow \R^{d+1}$ is given by
  \[
  A \phi = \int_0^1 z(t) \phi(t) \, dt
  \]
  Then $\rank A = d+1$.
\end{lem}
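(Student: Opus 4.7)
The plan is to show surjectivity of $A$; since the codomain is $\R^{d+1}$, surjectivity immediately gives $\rank A = d+1$.

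My preferred approach is constructive via the Hilbert matrix. For any target vector $v \in \R^{d+1}$, I would look for a preimage of the special form $\phi(t) = z(t)^T c$ for some $c \in \R^{d+1}$ (note $\phi \in C[0,1]$ since it is a polynomial). Substituting gives
\[
A\phi = \int_0^1 z(t) z(t)^T \, dt \; c = H c,
\]
where $H_{ij} = \int_0^1 t^{i+j-2}\, dt = 1/(i+j-1)$ is the classical Hilbert matrix of order $d+1$. The key step is then to observe that $H$ is positive definite: for any nonzero $c \in \R^{d+1}$ one has $c^T H c = \int_0^1 (c^T z(t))^2\, dt > 0$, because $c^T z(t)$ is a nonzero polynomial of degree at most $d$ and hence has only finitely many zeros in $[0,1]$. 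Positive definiteness gives invertibility of $H$, so $c = H^{-1} v$ produces the desired preimage, establishing $A$ is onto.

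An equivalent dual formulation, if one prefers to avoid naming the Hilbert matrix, is to take any $c$ in the (left) kernel of the image of $A$: then $\int_0^1 (c^T z(t)) \phi(t)\, dt = 0$ for every $\phi \in C[0,1]$, and choosing $\phi(t) = c^T z(t)$ forces $c^T z(t) \equiv 0$ on $[0,1]$; since this is a polynomial of degree at most $d$, all its coefficients vanish, i.e., $c = 0$. Hence the image of $A$ is not contained in any proper hyperplane of $\R^{d+1}$, so $\rank A = d+1$.

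There is no real obstacle here; the only substantive content is the non-degeneracy of $z(t) z(t)^T$ integrated against Lebesgue measure on $[0,1]$, which is the standard fact that the monomials $1, t, \ldots, t^d$ are linearly independent in $L^2[0,1]$. I would write the argument in the constructive form above because it is the shortest and makes explicit that $A$ restricted to polynomial inputs of degree at most $d$ is already a bijection onto $\R^{d+1}$.
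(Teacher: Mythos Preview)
Your proposal is correct, and both variants you offer are more elementary than the paper's own argument. The paper also argues by contradiction from a nonzero vector $q\perp\range A$, so that $\int_0^1 q^T z(t)\,\phi(t)\,dt=0$ for all $\phi\in C[0,1]$; but instead of simply testing $\phi(t)=q^T z(t)$ as you do, it passes to the antiderivative $v(t)=\int_0^t q^T z(s)\,ds$, rewrites the condition as a vanishing Stieltjes integral $\int_0^1 \phi\,dv=0$, and invokes the Riesz representation theorem to conclude $v$ is constant and hence $q^T z\equiv 0$. Your direct substitution (or equivalently the positive-definiteness of the Hilbert matrix $H=\int_0^1 z(t)z(t)^T\,dt$) reaches the same conclusion in one line and avoids the measure-theoretic detour; the only ingredient needed is that a nonzero polynomial cannot vanish identically on $[0,1]$. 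The Hilbert-matrix version has the added benefit of being constructive: it exhibits an explicit right inverse of $A$ on polynomial inputs.
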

\begin{proof}
  Suppose for the sake of a contradiction that $\rank A < d+1$. Then
  $\range A $ is a strict subset of $\R^{d+1}$ and hence there exists
  a nonzero vector $q\in\R^{d+1}$ such that $q \perp \range A$. This
  means
  \[
  \int _0^1 q^T z(t) \phi(t) \, dt =0
  \]
  for all $\phi \in C[0,1]$. Since $q^Tz$ and $\phi$ are continuous
  functions, define the function $v:[0,1]\rightarrow \R$ by
  \[
  v(t) = \int_0^t q^T z(s) \, ds \qquad \text{for all }t\in[0,1].
  \]
  Since $v$ is absolutely continuous, we have for every $\phi \in
  C[0,1]$ that
  \begin{align*}
    \int_0^1 \phi(t) \, dv(t)
    &= \int _0^1 q^T z(t) \phi(t) \, dt
    \\ &=0
  \end{align*}
  where the integral on the left-hand-side of the above equation is
  the Stieltjes integral.  The function $v$ is also of bounded
  variation, since its derivative is bounded. The Riesz representation
  theorem~\cite{riesz_1990} implies that if $v$ is of bounded
  variation and
  \[
  \int _0^1  \phi(t) \, dv(t) =0
  \]
  for all $\phi \in C[0,1]$, then $v$ is constant on an everywhere
  dense subset of $(0,1)$. Since $v$ is continuous, we have $v$ is
  constant, and therefore $q^T z(t)=0$ for all $t$. Since $q^Tz$ is a
  polynomial, this contradicts the statement that $q\neq 0$.
\end{proof}

We now state the positivity result.

\begin{thm}\label{thm:mainpos2}
  Suppose $N$ is a symmetric binary piecewise polynomial matrix of degree $d$. Then
  \begin{equation}
    \label{eqn:qform2}
    \int_{-h}^{0}  \int_{-h}^{0} \phi(s)^T N(s,t) \phi(t) \, ds \, dt \geq 0
  \end{equation}
  for all $\phi\in C([-h,0],\R^n)$ if and only if there exists
  $Q\in\S^{nk(d+1)}$ such that
  \begin{align*}
    N(s,t) &= Z_{n,d}^T(s) Q Z_{n,d}(t) \\
    Q  & \succeq 0,
  \end{align*}
\end{thm}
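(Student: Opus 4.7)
}

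The easy direction is ``$\Leftarrow$''. If $N(s,t)=Z_{n,d}^T(s) Q Z_{n,d}(t)$ with $Q\succeq 0$, then by Fubini and the identity $\phi(s)^T Z_{n,d}^T(s) Q Z_{n,d}(t) \phi(t) = \bl(Z_{n,d}(s)\phi(s)\br)^T Q \bl(Z_{n,d}(t)\phi(t)\br)$, the double integral in~\eqref{eqn:qform2} equals $w^T Q w\geq 0$, where $w=\int_{-h}^0 Z_{n,d}(t)\phi(t)\,dt\in\R^{nk(d+1)}$.

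For the ``$\Rightarrow$'' direction, I would first invoke the characterization of symmetric binary piecewise polynomial matrices stated just before the theorem: there exists a symmetric $Q\in\S^{nk(d+1)}$ with $N(s,t)=Z_{n,d}^T(s)Q Z_{n,d}(t)$. (A short argument using the disjoint supports of the indicator blocks $g_i$ together with the linear independence of the monomials $1,t,\dots,t^d$ shows that this $Q$ is uniquely determined by $N$.) Define the linear map $A: C([-h,0],\R^n)\rightarrow \R^{nk(d+1)}$ by
\[
A\phi = \int_{-h}^0 Z_{n,d}(t)\phi(t)\,dt.
\]
By the computation above, the hypothesis~\eqref{eqn:qform2} reads $(A\phi)^T Q (A\phi) \geq 0$ for every $\phi\in C([-h,0],\R^n)$. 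The theorem will therefore follow immediately from the surjectivity of $A$, since surjectivity implies $w^T Q w\geq 0$ for every $w\in\R^{nk(d+1)}$, i.e., $Q\succeq 0$.

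The main obstacle is proving surjectivity of $A$. Indexing the coordinates of $A\phi$ by triples $(i,j,m)$ with $i\in\{1,\dots,k\}$, $j\in\{1,\dots,n\}$, $m\in\{0,\dots,d\}$, we have $(A\phi)_{ijm}=\int_{H_i}\phi_j(t)\,t^m\,dt$. Prescribing the target vector $w$ decouples over the component index~$j$, so it suffices to solve, for each fixed $j$ and any prescribed numbers $\{w_{ijm}\}_{i,m}$, the moment problem $\int_{H_i}\phi_j(t) t^m\,dt = w_{ijm}$ by a single continuous function $\phi_j\colon[-h,0]\rightarrow\R$. My plan is to do this one interval at a time using a strengthening of Lemma~\ref{lem:rank}: by replacing $C[0,1]$ with the subspace $C_c((0,1),\R)$ of continuous functions with compact support in the open interval and repeating the Riesz-representation argument (noting that the density of $C_c$ in $L^1$ forces $q^T z$ to vanish a.e.), one obtains that, after affine reparameterization of each $H_i$, there is a continuous $\psi_{ij}$ compactly supported in the interior of $H_i$ achieving the prescribed moments on $H_i$.

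Patching the $\psi_{ij}$ together by $\phi_j(t)=\sum_{i=1}^k \psi_{ij}(t)\,g_i(t)$ yields a continuous function on $[-h,0]$ (continuity at the breakpoints $-h_i$ is automatic because each $\psi_{ij}$ vanishes near the endpoints of $H_i$) with $A\phi = w$. This establishes surjectivity, and hence $Q\succeq 0$, completing the proof. The only real technical content beyond Lemma~\ref{lem:rank} is the interior-supported version just sketched; everything else reduces to algebraic bookkeeping with the Kronecker structure of $Z_{n,d}$.
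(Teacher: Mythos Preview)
Your proposal is correct and follows essentially the same route as the paper: represent $N$ via a symmetric $Q$, rewrite the quadratic form as $(J\phi)^T Q(J\phi)$ for the moment map $J\phi=\int Z_{n,d}(t)\phi(t)\,dt$, and reduce the claim to surjectivity of $J$, which is then obtained interval-by-interval from Lemma~\ref{lem:rank}. Your use of compactly supported moment witnesses on each $H_i$ is a nice refinement---the paper simply writes $J=\diag(L_1,\dots,L_k)\otimes I_n$ in restriction coordinates and invokes Lemma~\ref{lem:rank} on each block, tacitly ignoring the continuity constraint at the breakpoints that your construction handles explicitly.
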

\begin{proof}
  We only need to show the \emph{only if} direction. Suppose $N$ is a
  symmetric binary piecewise polynomial matrix. Let $d$ be the degree of $N$. Then there exists a
  symmetric matrix $Q$ such that
  \[
  N(s,t) = Z_{n,d}^T(s) Q Z_{n,d}(t).
  \]
  Now suppose that the inequality~(\ref{eqn:qform2}) is satisfied for
  all continuous functions $\phi$. We will show that every such $Q$ is
  positive semidefinite. To see this, define the linear map
  $J:C([-h,0],\R^n) \rightarrow \R^{nk(d+1)}$ by
  \[
  J\phi = \int_{-h}^0 (g(t) \otimes I_n \otimes z(t)) \phi(t) \, dt.
  \]
  Then
  \[
  \int_{-h}^{0}  \int_{-h}^{0} \phi(s)^T N(s,t) \phi(t) \, ds \, dt
  = (J \phi)^T Q (J\phi).
  \]
  The result we desire holds if $\rank J = nk(d+1)$, since in this
  case $\range J = \R^{nk(d+1)}$.  Then if $Q$ has a negative
  eigenvalue with corresponding eigenvector $q$, there exists $\phi$
  such that $q=J\phi$ so that the quadratic form will be negative,
  contradicting the hypothesis.

  To see that $\rank J = nk(d+1)$, define for each $i=1,\dots,k$ the
  linear map $L_i:C[H_i] \rightarrow \R^n$ by
  \[
  L_i \phi = \int_{H_i} z(t) \phi(t) \, dt
  \]
  Then if we choose coordinates for $\phi$ such that
   \[
  \phi = \bmat{
    \phi|_{H_1}  \\
    \phi|_{H_2} \\
    \vdots\\
    \phi|_{H_k}
  }
  \]

  where $\phi|_{H_j}$ restriction of $\phi$
  to the interval $H_j$, then we have in these coordinates that $J$ is
  \[
  J = \diag (L_1,\dots,L_k) \otimes I_n.
  \]
   Further, by Lemma~\ref{lem:rank} the maps $L_i$ each satisfy
$\rank
  L_i=d+1$. Therefore $\rank J = nk (d+1)$ as desired.

\end{proof}

The following corollary gives a tighter degree bound on the
representation of $N$.

\begin{cor}\label{cor:dbound2}
Let $N$ be a binary piecewise polynomial matrix of degree $2d$ which
is positive in the sense of Equation~\eqref{eqn:qform2}, then there
exists a $Q\in\S^{nk(d+1)}$ such that
  \begin{align*}
    N(s,t) &= Z_{n,d}^T(s) Q Z_{n,d}(t) \\
    Q  & \succeq 0,
  \end{align*}
\end{cor}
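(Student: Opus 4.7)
\medskip

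My plan is to deduce the corollary from Theorem~\ref{thm:mainpos2} by applying it with the (potentially larger) bound on the degree in each individual variable, and then compressing the resulting Gram matrix using a leading-coefficient argument that exploits the total-degree bound $2d$.

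First, since the positivity of the form in~\eqref{eqn:qform2} depends only on the symmetric part of $N$, replace $N$ by $(N(s,t)+N(t,s)^T)/2$ and assume $N$ is symmetric. A binary piecewise polynomial of total degree $2d$ has degree at most $2d$ in each variable on every piece $H_i\times H_j$. So Theorem~\ref{thm:mainpos2}, applied with parameter $2d$, gives a PSD matrix $Q'\in\S^{nk(2d+1)}$ with $N(s,t) = Z_{n,2d}^T(s)\, Q'\, Z_{n,2d}(t)$. The task is then to show this $Q'$ can be replaced by a smaller PSD matrix supported on the monomials of degree at most $d$.

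Next, I would take a spectral decomposition $Q' = \sum_i \lambda_i v_i v_i^T$ with $\lambda_i>0$, and set $u_i(s) = Z_{n,2d}^T(s)\, v_i$, which are piecewise polynomial $\R^n$-valued functions of piecewise degree at most $2d$. Then
\[
N(s,t) = \sum_i \lambda_i\, u_i(s)\, u_i(t)^T,
\]
a sum of rank-one PSD kernels. The key step is the claim that each $u_i$ has piecewise degree at most $d$. Fix a piece $H_j$ and let $D_j = \max_i \deg(u_i|_{H_j})$. The coefficient of $s^{D_j} t^{D_j}$ in $N|_{H_j\times H_j}$ equals $\sum_{i:\deg(u_i|_{H_j})=D_j} \lambda_i\, c_{ij} c_{ij}^T$, where $c_{ij}$ is the leading coefficient of $u_i|_{H_j}$. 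Each summand is PSD and nonzero (since $\lambda_i>0$ and $c_{ij}\neq 0$ by definition of the degree), so the sum is a nonzero PSD matrix; no cancellation is possible. Since $N$ has total degree at most $2d$ on $H_j\times H_j$, this forces $2D_j \leq 2d$, i.e., $D_j \leq d$.

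Once each $u_i$ has piecewise degree at most $d$, it can be represented as $u_i(s) = Z_{n,d}^T(s)\, v_i'$ for some $v_i'\in\R^{nk(d+1)}$. Setting $Q = \sum_i \lambda_i v_i' (v_i')^T \in \S^{nk(d+1)}$ gives $Q\succeq 0$ and $N(s,t) = Z_{n,d}^T(s)\, Q\, Z_{n,d}(t)$, as required. The main obstacle is the leading-coefficient argument in the third paragraph: it relies critically on the fact that the decomposition is a sum of PSD kernels, so that the top-degree coefficients cannot cancel across the summands, combined with careful piecewise bookkeeping to handle the jump discontinuities in $N$ across the intervals $H_i$.
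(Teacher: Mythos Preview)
Your proposal is correct and follows essentially the same approach as the paper: apply Theorem~\ref{thm:mainpos2} at degree $2d$ to obtain a PSD Gram matrix $P\in\S^{nk(2d+1)}$, then use the degree bound together with positive semidefiniteness to kill the high-degree part. The only difference is packaging: the paper observes directly that the diagonal entries of $P$ indexed by monomial degrees $>d$ must vanish (since they are the coefficients of $s^m t^m$ with $2m>2d$ in $N$), and then uses the fact that a zero diagonal entry in a PSD matrix forces the entire row and column to be zero, whereas you reach the same conclusion via a spectral decomposition and a leading-coefficient non-cancellation argument.
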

\begin{proof}
The binary representation used in Theorem~\ref{thm:mainpos2} had the
form
\begin{align*}
    N(s,t) &= Z_{n,2d}^T(s) P Z_{n,2d}(t) \\
    P  & \succeq 0,
\end{align*}
where $P\in\S^{nk(2d+1)}$. However, in any such a representation, it
is clear that $P_{ij,ij}=0$ for $i=d+2,\dots,2d+1$ and
$j=1,\dots,kn$. Therefore, since $P\succeq 0$ these rows and columns
are $0$ and can be removed. Define $Q$ to be the reduction of $P$.
$Z_{n,d}$ is the corresponding reduction of $Z_{n,2d}$. Then $Q \in
\S^{nk(d+1)}$, $Q \succeq 0$, and
\[
    N(s,t)=Z_{n,2d}^T(s) P Z_{n,2d}(t)=Z_{n,d}^T(s) Q Z_{n,d}(t).
\]
\end{proof}
For convenience, we define the set of symmetric binary piecewise
polynomial matrices which define positive quadratic forms by
\begin{align*}
  \Gamma_{n,d} &= \bl\{\, Z_{n,d}^T(s) Q Z_{n,d}(t) \ \vert\ Q
  \in\S^{nk(d+1)}, Q\succeq 0 \,\br\}.
\end{align*}

As for $\Sigma_{n,d}$, if we are given a binary piecewise polynomial
matrix $N:[-h,0]\times [-h,0]\rightarrow \S^n$ of degree $2d$ and
want to know whether it defines a positive quadratic form, then this
is computationally checkable using semidefinite programming. The
number of variables involved in this task scales as $(nk)^2(d+1)^2$.

\section{Stability Conditions}

\begin{thm}
  \label{thm:main}
  Suppose there exist $d\in\N$ and piecewise matrix polynomials
  $M,T,N,D,U,E$ such that
  \begin{align*}
    M + \bmat{T & 0 \\ 0 & 0 } &\in \Sigma_{2n,d} \\
    -D + \bmat{U & 0 \\ 0 & 0 } &\in \Sigma_{(k+2)n,d} \\
    N &\in \Gamma_{n,d} \\
    -E & \in \Gamma_{n,d} \\
    (D,E) &= L (M,N) \\
    \int_{-h}^{0} T(s)\, ds &=0 \\
    \int_{-h}^{0} U(s)\, ds &=0 \\
    M_{11}& \succ 0 \\
    D_{11}& \prec 0
  \end{align*}
  Then the system defined by
  equations~(\ref{eqn:sys}) and~(\ref{eqn:bc}) is exponentially stable.
\end{thm}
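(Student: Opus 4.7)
The plan is to reduce to Theorem~\ref{thm:lyap}: define $V\in X$ by equation~\eqref{eqn:vclass} from the given $M\in Y_1$ and $N\in Y_2$, and verify the two inequalities $V(\phi)\geq\eps\norm{\phi(0)}^2$ and $\dot V(\phi)\leq-\eps\norm{\phi(0)}^2$ for some $\eps>0$. Lemma~\ref{lem:derivative} together with the hypothesis $(D,E)=L(M,N)$ identifies $\dot V$ with the quadratic form in~\eqref{eqn:vdot} built from the given $D$ and $E$, so both inequalities can be attacked by the same recipe: express each side as a single integral of a pointwise quadratic form plus a double integral, and invoke the positivity theorems of Section~\ref{sec:theory}.

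For positivity of $V$, the hypothesis $M+\bmat{T & 0 \\ 0 & 0}\in\Sigma_{2n,d}$ forces $M(t)+\bmat{T(t)&0\\0&0}\succeq 0$ pointwise on $[-h,0]$. Combined with $\int_{-h}^0 T(s)\,ds=0$, these are exactly the ingredients of clause (ii) of Theorem~\ref{thm:mainpos}, whose easy direction (ii)$\Rightarrow$(i) yields $\int_{-h}^0 \bmat{\phi(0)\\\phi(t)}^T M(t)\bmat{\phi(0)\\\phi(t)}\,dt\geq 0$ for all $\phi\in C[-h,0]$; the remark following Theorem~\ref{thm:mainpos} justifies the identification $x=\phi(0)$. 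Simultaneously, $N\in\Gamma_{n,d}$ supplies the representation $N(s,t)=Z_{n,d}(s)^T Q Z_{n,d}(t)$ with $Q\succeq 0$, so Theorem~\ref{thm:mainpos2} makes the corresponding double integral nonnegative. Hence $V(\phi)\geq 0$. To upgrade to $V(\phi)\geq\eps\norm{\phi(0)}^2$, I would exploit the fact that the constant block $M_{11}$ contributes exactly $h\phi(0)^T M_{11}\phi(0)$ to $V$: pick $\mu>0$ with $M_{11}\succeq\mu I_n$ and absorb a small multiple of $\diag(I_n,0)$ into the decomposition to extract the desired quadratic margin in $\phi(0)$.

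The argument for $\dot V$ is structurally identical but uses the derivative-side hypotheses. Lemma~\ref{lem:derivative} writes $\dot V$ as in~\eqref{eqn:vdot}, with the ``$x$-block'' taken to be $\bmat{\phi(-h_0)\\ \vdots \\ \phi(-h_k)}\in\R^{(k+1)n}$ and the ``$y(s)$-block'' equal to $\phi(s)$. Applying Theorem~\ref{thm:mainpos} to $-D$ together with $-D+\bmat{U & 0\\0 & 0}\in\Sigma_{(k+2)n,d}$ and $\int_{-h}^0 U(s)\,ds=0$ yields nonnegativity of the single integral of $-\dot V$, while Theorem~\ref{thm:mainpos2} applied to $-E\in\Gamma_{n,d}$ handles the double integral, so $\dot V(\phi)\leq 0$. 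The strict condition $D_{11}\prec 0$ then produces the bound $\dot V(\phi)\leq-\eps\norm{\phi(0)}^2$ by the same constant-block absorption argument used for $V$, and invoking Theorem~\ref{thm:lyap} delivers exponential stability.

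The main obstacle I anticipate is the passage from pointwise semidefiniteness to the strict quadratic bounds in $\phi(0)$: SOS membership only certifies nonstrict pointwise $\succeq 0$, whereas Theorem~\ref{thm:lyap} demands a uniform margin proportional to $\norm{\phi(0)}^2$. Exploiting that $M_{11}$ and $D_{11}$ are constant blocks (by the definitions of $Y_1$ and $Z_1$) and strictly definite seems the cleanest route, but any implementation must verify that the perturbed decomposition still satisfies the hypotheses of Theorems~\ref{thm:mainpos} and~\ref{thm:mainpos2} after the shift; equivalently, one must adjust $T$ and $U$ by a small constant multiple of the identity so that the integral-zero condition is preserved while the SOS matrix picks up the required strictly positive diagonal contribution.
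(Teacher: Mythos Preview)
Your proposal matches the paper's proof exactly: define $V$ via~\eqref{eqn:vclass}, obtain $\dot V$ from Lemma~\ref{lem:derivative}, invoke the easy directions of Theorems~\ref{thm:mainpos} and~\ref{thm:mainpos2} to get nonnegativity of each term of $V$ and $-\dot V$, and then appeal to $M_{11}\succ 0$ and $D_{11}\prec 0$ for the strict margins before applying Theorem~\ref{thm:lyap}. The paper treats the passage from nonstrict to strict inequality just as tersely as you anticipate---it simply asserts that the strict positivity conditions of~\eqref{eqn:lyapconds} hold since $M_{11}\succ 0$ and $-D_{11}\succ 0$---so the obstacle you flag is not something the paper resolves in more detail.
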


\begin{proof}
  Assume $M,T,N,D,U,E$ satisfy the above conditions, and define the
  function $V$ by (\ref{eqn:vclass}). Then Lemma~\ref{lem:derivative}
  implies that $\dot{V}$ is given by~\eqref{eqn:vdot}.  The function
  $V$ is the sum of two terms, each of which is nonnegative. The first
  is nonnegative by Theorem~\ref{thm:mainpos} and the second is
  nonnegative since $N\in\Gamma_{n,d}$. The same is true for $\dot{V}$.
  The strict positivity conditions of equations~\eqref{eqn:lyapconds}
  hold since $M_{11}\succ0$ and $-D_{11}\succ0$, and Theorem~\ref{thm:lyap}
  then implies stability.
\end{proof}

The feasibility conditions specified in Theorem~\ref{thm:main} are
semidefinite-representable. In particular the condition that a
piecewise polynomial matrix lie in $\Sigma$ is a set of linear and
positive semidefiniteness constraints on its coefficients.  Similarly,
the condition that $T$ and $U$ integrate to zero is simply a linear
equality constraint on its coefficients. Standard semidefinite
programming codes may therefore be used to efficiently find such
piecewise polynomial matrices. Most such codes will also return a dual
certificate of infeasibility if no such polynomials exist.

As in the Lyapunov analysis of nonlinear systems using
sum-of-squares polynomials, the set of candidate Lyapunov functions
is parametrized by the degree $d$. This allows one to search first
over polynomials of low degree, and increase the degree if that
search fails.

There are various natural extensions of this result. The first is to
the case of uncertain systems, where we would like to prove
stability for all matrices $A_i$ in some given semialgebraic set.
This is possible by extending Theorem~\ref{thm:main} to allow
Lyapunov functions which depend polynomially on unknown parameters.
A similar approach may be used to check stability for systems with
uncertain delays. Additionally, stability of systems with
distributed delays defined by polynomial kernels can be verified. It
is also straightforward to extend the class of Lyapunov functions,
since it is not necessary that each piece of the piecewise
sums-of-squares functions be nonnegative on the whole real line.  To
do this, one can use techniques for parameterizing polynomials
nonnegative on an interval; for example, every polynomial $p(x)=f(x)
- (x-1)(x-2)g(x)$ where $f$ and $g$ are sums of squares is
nonnegative on the interval $[1,2]$.

\section{Numerical Examples}

In this section we present the results of some example computations
using the approach described above. The computations were performed
using Matlab, together with the SOSTOOLS~\cite{prajna_2002} toolbox
and SeDuMi~\cite{sturm_1999} code for solving semidefinite
programming problems.

\subsection{Illustration.}
Consider the process of proving stability using the results of this
paper. The following system has well-known stability properties.
\begin{equation}
    \dot{x}(t)=-x(t-1)\label{eqn:sys2}
\end{equation}
A Matlab implementation of the algorithm in this paper has been
developed and is available online. This implementation returns the
following Lyapunov function for system~\eqref{eqn:sys2}. For
symmetric matrices, sub-diagonal elements are suppressed.
\begin{align*}
    V(x)&=\int_{-1}^0 \bmat{x(0)\\x(s)}^T M(s)
        \bmat{x(0)\\x(s)}\,ds
    +\int_{-1}^0\int_{-1}^0 x(s)^T R(s,t) x(t) \, ds \,
    dt
\end{align*}
where
\[
M(s)=\bmat{27.3&\;-16.8+2.74s\\ & 24.3+8.53 s}
\]
and
\[
R(s,t)=9.08.
\]

Positivity is proven using the function
\[
    t(s)=-.915+1.83s
\]
and the sum of squares functions
\[
    Q(s)=\bmat{13 &\;-3.3\\&\;12.2} \ge 0
\]
and
\[
    V(s)=Z(s)^T\, L \,Z(s),
\]
where
\[Z(s)=\bmat{\;1&s&0&0\\0&0&\;1&s\;}^T
\]
and
\[
    L=\bmat{28.215\;&5.585&\;-16.8&\;-1.973\\&13&1.413&-3.3\\&&24.3&10.365\\&&&12.2} \ge 0.
\]
This is because $-s(s+1)\ge 0$ for $s \in [-1,0]$ and
\[
    M(s)+\bmat{t(s)&0\\0&0}=-s(s+1)Q(s)+ V(s).
\]
Furthermore
\[
    R(s)= 9.08 \ge 0.
\]
Therefore, by Theorems~\ref{thm:main} and~\ref{thm:mainpos}, the
Lyapunov function is positive.

The derivative of the function is given by
\begin{align*}
    \dot{V}(x)&=\int_{-1}^0 \bmat{x(0)\\x(-1)\\x(s)}^T
        D(s)\bmat{x(0)\\x(-1)\\x(s)}\,ds
\end{align*}
where
\[
-D(s)=\bmat{9.3\quad&7.76&\;-6.34\\\quad
&15.77&\;-7.72+2.74s\\\quad&&\;8.53}.
\]
Negativity of the function is proven using the function
\[
    U(s)=\bmat{.0055+.011s& \;-.272-.544s\,\\& \;-.458-.916
    s\,},
\]
where
\[
    \int_{-1}^0U(s)\,ds=0,
\]
and the sum-of-squares functions
\[
    X(s)=\bmat{8.86 &1.90& \;-3.23\\  &11.54 &\;-3.71\\& & \;7.74}
\]
and
\[
    Y(s)=Z(s)^T\, L \,Z(s),
\]
where
\[Z(s)=\bmat{1&s&0&0&0&0\\0&0&\;1&s&0&0\\0&0&0&0&\;1&s}^T
\]
and
\[
    L=\bmat{9.3&4.43&7.76&.61896&-6.34&-1.0371\\
    &8.86&1.281&1.9&-2.1929&-3.23\\
    &&15.77&5.77&-7.72&.01171\\
    &&&11.54&-.98171&-3.71\\
    &&&&8.53&3.87\\
    &&&&&7.74} \ge 0.
\]
Negativity follows since
\[
    -D(s)+\bmat{U(s)&0\\0&0}=-s(s+1)X(s)+ Y(s).
\]
Therefore, by Theorem~\ref{thm:main}, the derivative of the Lyapunov
function is negative. Stability follows by Theorem~\ref{thm:lyap}.

\subsection{A single delay.}  We consider the following instance
of a system with  a single delay.
\[
\dot{x}(t) = \bmat{0 & 1 \\ -2 & 0.1} x(t) + \bmat{0& 0 \\ 1 & 0}
x(t-h)
\]
For a given $h$, we use semidefinite programming to search for a
Lyapunov function of degree $d$ that proves stability. Using a
bisection search over $h$, we determine the maximum and minimum $h$
for which the system may be shown to be stable. These are
shown below.
\[
\hbox{\begin{tabular}{c|c|c}
$d$ & $h_{\min}$ & $h_{\max}$\\
\hline
1 & .10017 & 1.6249\\
\hline
2 & .10017 & 1.7172\\
\hline
3 & .10017 & 1.71785
\end{tabular}}
\]
When the degree $d=3$, the bounds $h_\text{min}$ and $h_\text{max}$
are tight~\cite{gu_2003}. For comparison, we include here the bounds
obtained by Gu~\cite{gu_2003} using a piecewise linear Lyapunov
function with $n$ segments.
\[
\hbox{\begin{tabular}{c|c|c}
$n$& $h_{\min}$ & $h_{\max}$\\
\hline
1 & .1006 & 1.4272\\
\hline
2 & .1003 & 1.6921\\
\hline
3 & .1003 & 1.7161
\end{tabular}}
\]

\subsection{Multiple delays.}  Consider the system with two delays
below.
\[
\dot{x}(t) = \bmat{0 & 1 \\ -1 & \frac{1}{10}} x(t)
+ \bmat{0 & 0 \\ -1 & 0} x(t- \textstyle \frac{h}{2})
+ \bmat{0 & 0 \\ 1 & 0} x(t-h)
\]
As above, using a bisection search over $h$, we prove stability
for the range of $h$ below.
\[
\hbox{\begin{tabular}{c|c|c}
$d$ & $h_{\min}$ & $h_{\max}$\\
\hline
1 & .20247 & 1.354\\
\hline
2 & .20247 & 1.3722
\end{tabular}}
\]
Here for degree $d=2$, the bounds obtained are tight. Again we include
here bounds obtained by Gu~\cite{gu_2003} using a piecewise linear
Lyapunov function with $n$ segments.
\[
\hbox{\begin{tabular}{c|c|c}
$n$ & $h_{\min}$ & $h_{\max}$\\
\hline
1 & .204 & 1.35\\
\hline
2 & .203 & 1.372
\end{tabular}}
\]


\section{Summary}

In this paper we developed an approach for computing Lyapunov
functions for linear systems with delay. In general this is a
difficult computational problem, and certain specific classes of
this problem are known to be NP-hard~\cite{toker_1996}. However, the
set of Lyapunov functions is convex, and this enables us to
effectively test nonemptiness of a subset of this set. Specifically,
we parameterize a convex set of positive quadratic functions using
the set of polynomials as a basis, and the main results here are
Theorems~\ref{thm:mainpos} and~\ref{thm:mainpos2}.  Combining these
results with the well-known approach using sum-of-squares
polynomials allows one to use standard semidefinite programming
software to compute Lyapunov functions. This gives a nested sequence
of computable sufficient conditions for stability of linear delay
systems, indexed by the degree of the polynomial. In principle this
enables searching over increasing degrees to find a Lyapunov
function, although further work is needed to enhance existing
semidefinite programming codes to make this more efficient in
practice.

It is possible that Theorem~\ref{thm:mainpos} and its proof techniques
are applicable more widely, specifically to stability analysis of
nonlinear systems with delay, as well as to controller synthesis for
such systems. One specific extension that is possible is analysis of
delay systems with uncertain parameters, for which sufficient
conditions for existence of a Lyapunov function may be given using
convex relaxations. It is also possible to analyze stability of
nonlinear delay systems, in the case that the dynamics are defined by
polynomial delay-differential equations. Further extensions to allow
synthesis of stabilizing controllers are of interest, and may be
possible.

\bibliography{delay}

\end{document}